\definecolor{webgreen}{rgb}{0,.5,0}
\def\N{{\mathds{N}}}
\def\1{{\bf 1}}
\newtheorem{theorem}{Theorem}%[section]
\newtheorem{thm}[theorem]{Theorem}
\newtheorem{lemma}[theorem]{Lemma}
\newtheorem{cor}[theorem]{Corollary}
\newtheorem{prop}[theorem]{Proposition}
\begin{document}

\title{Coefficients of (inverse) unitary cyclotomic polynomials}
\author{G.\,Jones, P.\,I. Kester, L.\,Martirosyan, P.\,Moree, 
L.\,T\'oth, B.\,B.\,White and B.\,Zhang}
\maketitle

\begin{abstract}
The notion of block divisibility naturally leads one to introduce
unitary cyclotomic polynomials $\Phi_n^*(x)$. They can be written as certain products
of cyclotomic poynomials. 
We study the case where $n$ has two or three distinct prime factors using
numerical semigroups, respectively
Bachman's inclusion-exclusion polynomials. Given $m\ge 1$ we show that every integer
occurs as a coefficient of $\Phi^*_{mn}(x)$ for some $n\ge 1$ 
following Ji, Li and Moree \cite{JLM}. 
Here $n$ will 
typically have many different prime factors.
We also consider similar questions for the polynomials
$(x^n-1)/\Phi_n^*(x),$ the inverse unitary cyclotomic
polynomials. 
\end{abstract}

\section{Introduction}

\subsection{(Inverse) (unitary) cyclotomic polynomials}
The \emph{cyclotomic polynomials} $\Phi_n(x)$ are defined by
\begin{equation*}
\Phi_n(x)=\prod_{\substack{j=1\\ (j,n)=1}}^n \left(x- \exp(2\pi ij /n) \right).
\end{equation*}
They are monic polynomials of degree $\varphi(n)$ (with $\varphi$ 
Euler's totient function) and arise as irreducible factors on
factorizing $x^n-1$ over the rationals:
\begin{equation}
    \label{factor}
    x^n-1=\prod_{d|n}\Phi_d(x).
\end{equation}
By M\"obius inversion it follows from \eqref{factor} that
\begin{equation} \label{Phi}
\Phi_n(x)=\prod_{d\mid n} \left(x^{n/d}-1\right)^{\mu(d)}=\prod_{d\mid n} \left(x^{d}-1\right)^{\mu(n/d)},
\end{equation}
where $\mu$ denotes the M\"obius function.

A divisor $d$ of $n$ ($d,n\in \N$) is called a \emph{unitary divisor} (or block divisor) if $(d,n/d)=1$, notation $d\mid\mid n$ (this is in agreement with the standard notation $p^a \mid\mid n$ used for prime powers $p^a$).
If in \eqref{factor} one only considers block divisors $d$ of $n,$ 
the resulting factors are the \emph{unitary cyclotomic polynomials} $\Phi^*_d(x),$ that is, we have 
\begin{equation} \label{prod_Phi_star}
x^n-1 = \prod_{d\mid\mid n} \Phi^*_d(x).
\end{equation}
Just as the system of equations  
\eqref{factor} (taking $n=1,2,\ldots$) implicitly uniquely defines
the cyclotomic polynomials, so does the latter system of equations uniquely define
the unitary cyclotomic polynomials (the reader preferring an explicit definition 
is referred to \eqref{Phi_Phi_star}).
The polynomial $\Phi^*_n(x)$ 
is monic, has integer coefficients and is
of degree $\varphi^*(n),$ with 
 $\varphi^*(n)=\# \{j: 1\le j \le n, (j,n)_*=1\}$
 and 
$(j,n)_*=\max \{d: d\mid j, d\mid\mid n\}.$
If $n=\prod q_i$ is the factorization of $n$ in pairwise
coprime prime powers, then $\varphi^*(n)=\prod_i (q_i-1).$
Note that $\varphi^*(n)\ge \varphi(n)$.

The unitary equivalent of \eqref{Phi} reads
\begin{equation} \label{Phi_star}
\Phi^*_n(x)=\prod_{d\mid\mid n} \left(x^{n/d}-1\right)^{\mu^*(d)}=\prod_{d\mid\mid n} \left(x^{d}-1\right)^{\mu^*(n/d)},
\end{equation}
where $\mu^*(n)=(-1)^{\omega(n)}$ and 
$\omega(n)$ denotes the number of distinct prime
factors of $n$. Note that since 
$\sum_{d||n}\mu^*(d)=0$ for $n>1,$ we can alternatively
write, for $n>1,$
\begin{equation}
    \label{variation}
    \Phi_n^*(x)=\prod_{d||n}
    \left(1-x^d\right)^{\mu^*(n/d)}.
\end{equation}

Comparison of \eqref{factor} and 
\eqref{prod_Phi_star} shows that $\Phi^*_n(x)$ is a product of 
cyclotomic polynomials. The next 
result, proved in 
\cite{MorToth} where also many
connections with the theory of
arithmetic functions are pointed 
out, makes this precise.
\begin{theorem}[Moree and T\'oth \cite{MorToth}] 
\label{Th_pol}
For any natural number $n$ we have
\begin{equation} 
\label{Phi_Phi_star}
\Phi^*_{n}(x)= \prod_{\substack{d\mid n\\ \kappa(d)=\kappa(n)}} \Phi_d(x),
\end{equation}
where $\kappa(n)=\prod_{p|n}p$ is the square-free kernel of $n.$
\end{theorem}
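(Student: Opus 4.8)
The plan is to derive \eqref{Phi_Phi_star} directly from the product formula \eqref{Phi_star}, by expanding each factor $x^{n/d}-1$ through the cyclotomic factorization \eqref{factor} and then collecting the exponent attached to each $\Phi_e(x)$. First I would fix the factorization $n=\prod_{i=1}^{\omega(n)}p_i^{a_i}$, so that the unitary divisors $d\mid\mid n$ are precisely the numbers $d=\prod_{i\in S}p_i^{a_i}$ indexed by subsets $S\subseteq\{1,\dots,\omega(n)\}$, and for such $d$ one has $\mu^*(d)=(-1)^{|S|}$. Substituting $x^{n/d}-1=\prod_{e\mid n/d}\Phi_e(x)$ into \eqref{Phi_star} and interchanging the order of the products yields
\[
\Phi^*_n(x)=\prod_{e\mid n}\Phi_e(x)^{N(e)},\qquad
N(e)=\sum_{\substack{d\mid\mid n\\ e\mid n/d}}\mu^*(d).
\]

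The crux of the argument is the evaluation of the integer exponent $N(e)$. Writing $e=\prod_i p_i^{b_i}$ and letting $T=\{i:b_i\ge 1\}$ denote the set of primes dividing $e$, I would observe that the exponent of $p_i$ in $n/d$ equals $a_i$ when $i\notin S$ and equals $0$ when $i\in S$; hence the divisibility $e\mid n/d$ holds exactly when $b_i=0$ for every $i\in S$, i.e. precisely when $S\cap T=\varnothing$. Letting $U$ be the set of primes of $n$ not dividing $e$, this says $S\subseteq U$, so that
\[
N(e)=\sum_{S\subseteq U}(-1)^{|S|}=(1-1)^{|U|},
\]
by the binomial theorem. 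This collapses to $1$ if $U=\varnothing$ and to $0$ otherwise. Since $e\mid n$ already forces $\kappa(e)\mid\kappa(n)$, the condition $U=\varnothing$ is equivalent to $\kappa(e)=\kappa(n)$, and \eqref{Phi_Phi_star} follows.

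I expect the only genuine obstacle to be the bookkeeping that produces the condition $e\mid n/d\iff S\cap T=\varnothing$; once that equivalence is pinned down, the inclusion–exclusion cancellation is immediate, and no delicate estimate or deeper input is needed. As a cross-check and alternative route, I could bypass \eqref{Phi_star} entirely: setting $G_n(x)=\prod_{d\mid n,\,\kappa(d)=\kappa(n)}\Phi_d(x)$, I would verify that these polynomials satisfy the defining system \eqref{prod_Phi_star}, i.e. $\prod_{d\mid\mid n}G_d(x)=x^n-1$. This reduces to showing that each $e\mid n$ is counted exactly once on the left, which holds because there is a \emph{unique} unitary divisor $d\mid\mid n$ with $e\mid d$ and $\kappa(d)=\kappa(e)$, namely the product of those prime-power blocks $p^{a}\mid\mid n$ whose prime $p$ divides $e$. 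Since the system \eqref{prod_Phi_star} characterizes the $\Phi^*_n$ uniquely, this gives $G_n=\Phi^*_n$ and recovers \eqref{Phi_Phi_star}.
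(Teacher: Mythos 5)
Your proof is correct, but note that the paper itself never proves this theorem: it is quoted from the Moree--T\'oth preprint \cite{MorToth} (which the authors describe as developing the result alongside connections to the theory of arithmetic functions), so your argument is a self-contained derivation of something the paper only cites. Both of your routes check out. In the first, the key computation is sound: writing $d=\prod_{i\in S}p_i^{a_i}$ for $S$ a subset of the prime indices, the condition $e\mid n/d$ is indeed equivalent to $S\cap T=\varnothing$ (the primes of $e$ must avoid $S$, and the remaining exponent bounds are automatic from $e\mid n$), so the exponent of $\Phi_e$ collapses to $N(e)=\sum_{S\subseteq U}(-1)^{|S|}=(1-1)^{|U|}$, which is $1$ precisely when $\kappa(e)=\kappa(n)$ and $0$ otherwise; this is a clean unitary analogue of the classical M\"obius-inversion proof of \eqref{Phi}. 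Your second route is arguably even better matched to the paper's setup, since the paper takes the system \eqref{prod_Phi_star} as the implicit definition of $\Phi_n^*$ and asserts its uniqueness: you only need the bijection $(d,e)\mapsto e$ between pairs $d\mid\mid n$, $e\mid d$, $\kappa(e)=\kappa(d)$ and divisors $e$ of $n$ (with $d=\prod_{p\mid e}p^{v_p(n)}$ the unique preimage), which gives $\prod_{d\mid\mid n}G_d(x)=\prod_{e\mid n}\Phi_e(x)=x^n-1$ by \eqref{factor}, and then induction on the defining system forces $G_n=\Phi_n^*$. This second argument avoids invoking \eqref{Phi_star} altogether, and as a byproduct it reproves Corollary \ref{cyclo=uni} and makes transparent why $\Phi_n^*$ is a product of cyclotomic polynomials, which is exactly the point the paper wants from Theorem \ref{Th_pol}.
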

\begin{cor} \label{cyclo=uni}
%\label{cyclo=uni}
If $n$ is square-free, then $\Phi^*_n(x)=\Phi_n(x)$.
\end{cor}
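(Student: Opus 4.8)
The plan is to read the corollary off directly from the product formula \eqref{Phi_Phi_star} established in Theorem \ref{Th_pol}. The only substantive task is to determine which divisors $d$ of $n$ satisfy the indexing constraint $\kappa(d)=\kappa(n)$ once we specialize to square-free $n$, and to check that this set of divisors collapses to a single element.

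First I would record that for square-free $n$ the radical equals $n$ itself, so $\kappa(n)=n$. Then I would analyze the condition $\kappa(d)=\kappa(n)$ for an arbitrary divisor $d\mid n$. Since $\kappa(d)\mid d$ holds for every $d$, and we are assuming $d\mid n$, we obtain the divisibility chain $\kappa(d)\mid d\mid n$. Imposing $\kappa(d)=\kappa(n)=n$ then forces $n\mid d$; combined with $d\mid n$ this yields $d=n$. Hence $d=n$ is the unique divisor of $n$ surviving the constraint $\kappa(d)=\kappa(n)$.

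Consequently the product in \eqref{Phi_Phi_star} reduces to its single factor indexed by $d=n$, which gives $\Phi^*_n(x)=\Phi_n(x)$, as claimed.

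I expect no genuine obstacle here, since the statement is an immediate specialization of Theorem \ref{Th_pol}; the whole argument is a short divisor count. The one point deserving a moment's care is the divisibility chain $\kappa(d)\mid d\mid n$, which is exactly what pins down $d=n$ as the only admissible index and thereby makes the unitary and ordinary cyclotomic polynomials coincide in the square-free case.
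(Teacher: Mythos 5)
Your argument is correct, but it takes a different route from the one the paper actually uses. You deduce the corollary from Theorem \ref{Th_pol}: for square-free $n$ you have $\kappa(n)=n$, and the chain $\kappa(d)\mid d\mid n$ together with $\kappa(d)=n$ pins down $d=n$, so the product in \eqref{Phi_Phi_star} collapses to the single factor $\Phi_n(x)$. That is sound (one could shortcut it even further: $d\mid n$ square-free implies $\kappa(d)=d$, so the condition $\kappa(d)=\kappa(n)$ literally reads $d=n$). The paper, despite labelling the statement a corollary of Theorem \ref{Th_pol}, proves it directly instead: when $n$ is square-free, every divisor $d$ of $n$ satisfies $(d,n/d)=1$, so the divisors and the unitary divisors of $n$ coincide, and $\mu^*(d)=\mu(d)$ for each of them; hence the two products in \eqref{Phi} and \eqref{Phi_star} are term-by-term identical. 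The trade-off: your derivation is the natural specialization suggested by the corollary's placement, but it leans on Theorem \ref{Th_pol}, which the paper imports from \cite{MorToth} without proof; the paper's direct comparison of the M\"obius-inversion formulas is self-contained within the identities displayed in the introduction and needs nothing beyond the observation that divisibility and unitary divisibility agree for square-free integers. Both proofs are complete and correct.
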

This corollary is easily proved directly. In case $n$ is square-free, $\mu^*(n)=\mu(n)$ and we see that the products in \eqref{Phi} and
\eqref{Phi_star} are identical and therefore 
$\Phi_n(x)=\Phi_n^*(x)$.

By \eqref{factor} and \eqref{prod_Phi_star} we have, respectively,
\begin{equation}
\label{dubbel}
\Psi_n(x):=\frac{x^n-1}{\Phi_n(x)}
=\prod_{d<n,~d\mid n}\Phi_d(x)\text{, and~}\Psi^*_n(x):=\frac{x^n-1}{\Phi^*_n(x)}
=\prod_{d<n,~d\mid \mid n}\Phi^*_d(x),
\end{equation}
and thus both $\Psi_n$ and $\Psi_n^*$ are
polynomials having integer
coefficients.

The polynomials $\Psi_n$ were dubbed
\emph{inverse cyclotomic polynomials} by
Moree \cite{Pieter}, who was the first to
systematically study them. Meanwhile their
study found some application in cryptography, 
see, e.g., \cite{Dunand,HLLP}. Their coefficients
behave in various aspects very similar, but also in
various aspects quite dissimilar to the ordinary
cyclotomic coefficients.

The polynomials $\Psi_n^*$ seem not have be
systematically considered before. We will
call them \emph{inverse 
unitary cyclotomic polynomials}. 

The behavior of (inverse) cyclotomic coefficients is and was a topic of intense
study. The aim of this paper is to initiate the study of the coefficients
of $\Phi^*_n$ and $\Psi^*_n$.
%For a proof of Theorem \ref{Th_pol} the reader is referred to
%the paper by Moree and T\'oth \cite{MorToth}, where also many
%connections with the theory of arithmetic functions are pointed out.

\subsection{(Inverse) (unitary) cyclotomic coefficients}
\noindent We write 
\begin{equation}
    \label{coeffdef}
\Phi_n(x)=\sum_{j=0}^{\infty}a_n(j)x^j,
~\Psi_n(x)=\sum_{j=0}^{\infty}c_n(j)x^j,
~\Phi_n^*(x)
=\sum_{j=0}^{\infty}a_n^*(j)x^j,
~\Psi_n^*(x)
=\sum_{j=0}^{\infty}c_n^*(j)x^j.
\end{equation}
This notation looks perhaps strange to the reader, but implicitly 
defines the coefficients for every $j,$ which serves our purposes.

It turns out that for many $n$ the above polynomials 
are \emph{flat} (that is, they have all their coefficients in  $\{-1,0,1\}$).
The smallest $n$ for which the above four classes of polynomials are non-flat
are, respectively, $105,561,60$ and $120$, see the tables in Section \ref{sec:numerics}.
These tables perhaps also suggest that each of
the four polynomial families has every integer occurring as
a coefficient. The main result of this paper is that this is indeed
the case.

Ji, Li and 
Moree \cite[Theorem 1]{JLM} showed that given a fixed integer $m\ge 1$ we have
\begin{equation}
    \label{oud}
\{a_{mn}(j):n\ge 1,~j\ge 0\}=\{c_{mn}(j):n\ge 1,~j\ge 0\}=\mathbb Z.
\end{equation}
By a similar approach we will establish the following result.
\begin{theorem} 
\label{main}
Let $m\ge 1$ be fixed.
We have 
$$\{a^*_{mn}(j):n\ge 1,~j\ge 0\}=\{c^*_{mn}(j):n\ge 1,~j\ge 0\}=\mathbb Z.$$
\end{theorem}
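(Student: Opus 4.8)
The plan is to mimic the strategy of Ji, Li and Moree \cite{JLM} used to establish \eqref{oud}, transplanting it to the unitary setting via the product formula \eqref{Phi_Phi_star}. The key observation is that Theorem \ref{Th_pol} expresses $\Phi^*_{mn}(x)$ as a product of \emph{ordinary} cyclotomic polynomials $\Phi_d(x)$ over those $d\mid mn$ with $\kappa(d)=\kappa(mn)$. When $n$ is chosen to be squarefree and coprime to $m$ with $\kappa(m)\mid n$ playing no role (more precisely, arranging $mn$ so that its radical is controlled), one can hope to isolate a single dominant cyclotomic factor whose coefficients are already known to realize every integer, while the remaining factors contribute in a transparent, controllable way.

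\smallskip
\noindent\textbf{Outline of the main construction.}
First I would fix the target integer $t\in\Z$ and recall from \eqref{oud} that there exist $N$ and $j_0$ with $a_{mN}(j_0)=t$ (the $\Phi$-case; the $\Psi$-case is analogous using $c_{mN}$). The goal is to build an $n$ so that $\Phi^*_{mn}$ has $a^*_{mn}(j)=t$ for some $j$. The idea is to choose $n$ so that among the divisors $d\mid mn$ with $\kappa(d)=\kappa(mn)$, exactly one equals $mN$ (or a controlled multiple thereof), with all other such $d$ being large enough that their cyclotomic factors only affect high-degree coefficients of the product. Concretely, I would take $n$ of the form $n = N\cdot P$ where $P$ is a product of new large primes, all exceeding the degree of $\Phi_{mN}$, and squarefree, chosen so that $\kappa(mnP)$ forces the relevant divisor set $\{d\mid mn:\kappa(d)=\kappa(mn)\}$ to have a unique smallest element whose coefficients match those of $\Phi_{mN}$ up to degree $j_0$.

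\smallskip
\noindent\textbf{Key steps, in order.}
\begin{enumerate}
\item Reduce via \eqref{Phi_Phi_star}: write $\Phi^*_{mn}(x)=\prod_{\substack{d\mid mn\\ \kappa(d)=\kappa(mn)}}\Phi_d(x)$ and identify the divisor of smallest degree in this product.
\item Using the known realization \eqref{oud} for ordinary $\Phi$ and $\Psi$, fix a base $mN$ realizing the target integer $t$ as a low-degree coefficient.
\item Choose auxiliary primes to pad $n$ so that every competing factor $\Phi_d$ in the product has degree large enough not to interfere with the coefficient in question; here the elementary fact that multiplying by $\Phi_d(x)=x^{\varphi(d)}+\cdots$ with $\varphi(d)$ large leaves the low-degree coefficients of the product unchanged is the workhorse.
\item Verify that after this padding the coefficient $a^*_{mn}(j)$ (resp.\ $c^*_{mn}(j)$) equals $t$, and that $mn$ is indeed of the required shape $mn$ with $n\ge 1$.
\end{enumerate}

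\smallskip
\noindent\textbf{Main obstacle.}
The delicate point is controlling the \emph{set} $\{d\mid mn:\kappa(d)=\kappa(mn)\}$ so that the low-degree part of the product $\prod_d\Phi_d(x)$ is governed by a single factor whose coefficients are already understood. Unlike the ordinary case, enlarging $n$ changes the squarefree kernel and hence the entire index set of the product, so the padding primes must be introduced without disturbing the kernel condition that pins down the dominant factor. I expect the crux to be a careful bookkeeping argument showing that, for a suitable choice of $n$, the product of all but the dominant cyclotomic factor is congruent to $1$ modulo $x^{j_0+1}$, so that the coefficient $a^*_{mn}(j_0)$ coincides with the corresponding coefficient of the dominant factor, which by construction equals $t$. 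The inverse case $\Psi^*_{mn}=(x^{mn}-1)/\Phi^*_{mn}$ should then follow by the same reduction together with the divisor-product formula \eqref{dubbel}.
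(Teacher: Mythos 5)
Your step 3 rests on a false ``workhorse,'' and the obstacle you flag at the end is not bookkeeping but fatal. A cyclotomic polynomial of large degree does not have the shape $x^{\varphi(d)}+\cdots$ with negligible low-degree part: $\Phi_d$ is monic with constant term $1$ and in general nonzero low-order coefficients (e.g.\ $\Phi_d(x)\equiv 1-\mu(d)x\pmod{x^2}$), so multiplying by a high-degree cyclotomic factor certainly can change the low-degree coefficients of a product; only multiplication by a series $\equiv 1\pmod{x^M}$ preserves coefficients in degrees $<M$. Moreover, the index set in \eqref{Phi_Phi_star} is too rigid for your padding to exist at all: every index $d$ there satisfies $\kappa(d)=\kappa(mn)$, so any ``new large prime'' inserted into $n$ divides \emph{every} index, and your chosen factor $\Phi_{mN}$ then no longer occurs in the product. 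In fact, writing $K=\kappa(mn)$, every factor has index $Ke$ with $e\mid mn/K$, and $\Phi_{Ke}(x)=\Phi_K(x^e)$, so that
$$\Phi^*_{mn}(x)=\prod_{e\,\mid\, mn/K}\Phi_K(x^e).$$
Since $m/\kappa(m)$ always divides $mn/K$, the competing factors (those with $e>1$) always include $\Phi_K(x^{e_0})$, where $e_0$ is the least prime whose square divides $m$; this factor is $\equiv 1-\mu(K)x^{e_0}\pmod{x^{2e_0}}$ with $\mu(K)=\pm 1$, and no other competing factor reaches down to degree $e_0$. Hence the product of the competing factors has a nonzero coefficient in degree $e_0\le m/\kappa(m)$ --- a bound depending only on $m$, immune to any choice of $n$ --- and so is never $\equiv 1\pmod{x^{j_0+1}}$ once $j_0\ge e_0$. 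Nor can you keep $j_0<e_0$: for fixed $j$ the coefficient $a_N(j)$ takes only finitely many values as $N$ ranges over all integers (reduce $\Phi_N(x)\equiv\prod_{d\mid N,\,d\le j}(1-x^d)^{\mu(N/d)}\pmod{x^{j+1}}$), so realizing large $|t|$ forces $j_0$ to be large. Your argument therefore collapses exactly when $m$ is not squarefree, i.e.\ in every case not already handled by Proposition \ref{allcoeff}.

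The paper's proof, by contrast, does not try to make the contribution of the non-squarefree part of $m$ disappear; it computes with it. Choosing primes $p_1<\cdots<p_t$ in $(n,\frac{15}{8}n)$ with $n\ge 8m$, all $\equiv 1\pmod m$, and letting $n_1$ be their product (times one extra prime $>2p_1$ to force $\mu^*(n_1)=-1$), the expansion \eqref{variation} taken modulo $x^{2p_1}$ gives $\Phi^*_{mn_1}(x)\equiv \frac{1}{\Phi^*_m(x)}\bigl(1-\mu^*(m)(x^{p_1}+\cdots+x^{p_t})\bigr)$, because the only block divisors of $mn_1$ below $2p_1$ are the block divisors of $m$ and the $p_j$ themselves. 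Writing $u^*_m(j)$ for the Taylor coefficients of $1/\Phi^*_m(x)$, these are periodic in $j$ modulo $m$, so the congruences $p_j\equiv 1\pmod m$ collapse the convolution to $a^*_{mn_1}(k)=u^*_m(k)-\mu^*(m)\,t\,u^*_m(k-1)$ for $p_t\le k<2p_1$; evaluating $u^*_m$ at a few explicit residues (two cases, according to $\mu^*(m)=\pm 1$) produces the values $t-1$ and $1-t$ (respectively $t-1$, $1-t$, $-t$), which exhaust $\Z$ as $t$ runs over the positive integers, and the $\Psi^*$ statement follows from $\Psi^*_{mn_2}\equiv-\Phi^*_{mn_1}\pmod{x^{2p_1}}$ for a suitable companion $n_2$. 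If you want to rescue your outline, the necessary repair is precisely such an explicit computation of the convolution with the competing factors $\Phi_K(x^e)$ rather than an attempt to remove them --- and that computation is exactly what the $1/\Phi^*_m(x)$ device accomplishes.
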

The proof will show that we actually can restrict to the
case where $n$ is square-free and coprime
to $m$, cf. \eqref{square-free}.
The result of Ji, Li and Moree in case $m=1$ is due
to Suzuki \cite{Suzi}, who adapted a proof of 
Issai Schur (see, e.g., Emma Lehmer \cite{Lehmer}) showing that every negative
even number occurs as a
cyclotomic coefficient. Theorem \ref{main} in case $m$ is a prime power is 
due to Ji and Li \cite{JL}.

\subsection{Proof of Theorem \ref{main}}
\label{subcyclo}
\noindent Inspection of the proof of Theorem 1 in \cite{JLM} shows that the authors prove more 
than they claim, namely they show that
\begin{equation}
\label{square-free}
\{a_{mn}(j):n\ge 1,~n\text{~is~square-free},~(n,m)=1,~j\ge 0\}=\mathbb Z,
\end{equation}
and the same result with $a_{mn}(j)$ replaced by $c_{mn}(j)$.
\begin{prop}
\label{allcoeff}
Let $m\ge 1$ be square-free. We have 
$$\{a_{mn}^*(j):n\ge 1,~j\ge 0\}=\{c^*_{mn}(j):n\ge 1,~j\ge 0\}=\mathbb Z.$$
\end{prop}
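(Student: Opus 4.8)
The plan is to collapse the unitary problem onto the ordinary cyclotomic one and then quote the refined statement \eqref{square-free}. The mechanism is Corollary \ref{cyclo=uni}: at a square-free index the unitary and ordinary cyclotomic polynomials coincide. So the whole proof hinges on arranging that the index $mn$ is square-free, and the point requiring care is that this forces a restriction on $n$.

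First I would restrict the index set for $n$. Since $m$ is square-free by hypothesis, the product $mn$ is again square-free precisely when $n$ is square-free and $(n,m)=1$. For every such $mn$, Corollary \ref{cyclo=uni} yields $\Phi^*_{mn}(x)=\Phi_{mn}(x)$, and comparing the expansions in \eqref{coeffdef} gives $a^*_{mn}(j)=a_{mn}(j)$ for all $j\ge 0$. Therefore
\begin{align*}
&\{a^*_{mn}(j): n\ge 1,~n\text{ square-free},~(n,m)=1,~j\ge 0\}\\
&\qquad=\{a_{mn}(j): n\ge 1,~n\text{ square-free},~(n,m)=1,~j\ge 0\}=\mathbb Z,
\end{align*}
the final equality being exactly \eqref{square-free}. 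Since the left-hand set is contained in the full set $\{a^*_{mn}(j): n\ge 1,~j\ge 0\}$, which itself lies in $\mathbb Z$ because $\Phi^*_{mn}$ has integer coefficients, this forces the full set to equal $\mathbb Z$.

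The inverse case runs in complete parallel. For square-free $mn$, combining $\Phi^*_{mn}=\Phi_{mn}$ with the defining identity $\Psi^*_{mn}(x)=(x^{mn}-1)/\Phi^*_{mn}(x)$ from \eqref{dubbel} gives $\Psi^*_{mn}=\Psi_{mn}$, hence $c^*_{mn}(j)=c_{mn}(j)$ for all $j$. Feeding this into the $c$-analogue of \eqref{square-free} (stated immediately after it) and applying the same sandwiching yields $\{c^*_{mn}(j):n\ge 1,~j\ge 0\}=\mathbb Z$.

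There is thus no genuine combinatorial obstacle: once one notices that square-freeness is preserved under coprime square-free products, the unitary question reduces verbatim to the ordinary one and all the real work is absorbed into \eqref{square-free}. The only subtlety worth flagging is that the argument cannot begin from the coarser \eqref{oud}: if $n$ shares a prime with $m$ or is itself not square-free, then $mn$ need not be square-free, Corollary \ref{cyclo=uni} fails to apply, and in general $\Phi^*_{mn}\ne\Phi_{mn}$.
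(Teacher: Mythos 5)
Your proposal is correct and follows essentially the same route as the paper: restrict to square-free $n$ coprime to $m$ so that $mn$ is square-free, invoke Corollary \ref{cyclo=uni} to identify $a^*_{mn}(j)$ with $a_{mn}(j)$ (and $c^*_{mn}(j)$ with $c_{mn}(j)$), and then quote \eqref{square-free} and its inverse analogue. The paper's proof is just a terser version of exactly this argument, so there is nothing to add beyond noting that your explicit sandwiching step and the remark about why \eqref{oud} would not suffice are left implicit there.
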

\begin{proof}
If $m$ is square-free, then the index $mn$ of any coefficient appearing on
the left hand side of \eqref{square-free} is square-free. By Corollary \ref{cyclo=uni} it then follows
that $a_{mn}(j)=a^*_{mn}(j)$ and so the result follows from \eqref{square-free} for
the unitary cyclotomic coefficients. Likewise it follows for the inverse unitary cyclotomic coefficients.
\end{proof}

\noindent We are now ready to prove Theorem \ref{main}. By the latter result we could restrict to
non-square-free $m.$ However, this is not necessary as our argument works for
every $m>1.$

\begin{proof}[Proof of Theorem \ref{main}]
The result for $m=1$ is true by Proposition \ref{allcoeff}, so we may assume that $m>1.$

Let $t\ge 1$ be arbitrary but fixed. We will show that $t-1$ appears as a coefficient 
of $\Phi^*_{mn}(x)$ for some $n\ge 1$
(and in addition some variations of this).

Let $\pi(x; d, a)$ denote the
number of primes $p\le x$ that satisfy $p\equiv a({\rm mod~}d),$ with
$a,d$ coprime integers.
A quantitative version
of Dirichlet's prime number theorem for arithmetic progressions states
that, asymptotically, $\pi(x;d,a)\sim x/(\varphi(d) \log x)$. This
implies that there exist an integer $n\ge 8m$ and
primes $p_1,p_2,\ldots,p_t$ such that
$$n<p_1<p_2<\cdots <p_t<\frac{15}{8}n{\rm ~and~}
p_j\equiv 1 ({\rm mod~} m), \quad j=1,2,\ldots,t.$$
Clearly $p_t<2p_1$.
 Let $q$ be any prime exceeding $2p_1$ and put
$$n_1=
\begin{cases}
p_1p_2\cdots p_tq & \text{~if~}t\text{~is~even};\cr
p_1p_2\cdots p_t & \text{~otherwise}.
\end{cases}
$$ 
Note that $m$ and $n_1$ are coprime and that $\mu^*(n_1)=-1$. Using
these observations we conclude that
\begin{eqnarray}
\label{bluebird}
\Phi^*_{m n_1}(x)&\equiv &\prod_{d\mid \mid mn_1,~d<2p_1}(1-x^d)^{\mu^*(\frac{m n_1}{d})} ~({\rm mod~}x^{2p_1})\cr
&\equiv &\prod_{d\mid \mid m}(1-x^d)^{\mu^*(\frac{m}{d})\mu^*(m_1)}\prod_{j=1}^t(1-x^{p_j})^{\mu^*(\frac{m n_1}{p_j})} ~({\rm mod~}x^{2p_1}) \cr
&\equiv & \Phi^*_{m}(x)^{\mu^*(m_1)}
\prod_{j=1}^t(1-x^{p_j})^{-\mu^*(m n_1)} ~({\rm mod~}x^{2p_1})\cr
&\equiv & \frac{1}{\Phi^*_{m}(x)}
\prod_{j=1}^t(1-x^{p_j})^{\mu^*(m)} ~({\rm mod~}x^{2p_1})\cr
&\equiv & \frac{1}{\Phi^*_{m}(x)}\Big(1-\mu^*(m)(x^{p_1}+\ldots+x^{p_t})\Big)~({\rm
mod~}x^{2p_1}).
\end{eqnarray} 
Let $$\frac{1}{\Phi^*_m(x)}=\sum_{j=0}^{\infty}u^*_m(j)x^j$$ be the 
Taylor expansion of $1/\Phi^*_m(x)$ around $x=0.$ Noting that, for $|x|<1,$
$$\frac{1}{\Phi^*_m(x)}=-\Psi^*_m(x)(1+x^m+x^{2m}+\cdots )$$
and $m>m-\varphi^*(m)={\rm deg}\Psi_m^*,$ we see that 
$u^*_m(j)$ is an integer that only depends on the congruence
class of $j$ modulo $m.$
Thus, in particular, 
if $k\ge p_j$ we have $u^*_{m}(k-p_j)=u^*_{m}(k-1)$ 
since by assumption $p_j\equiv 1 ({\rm mod~} m).$ Using this and 
(\ref{bluebird}) we infer that, for $p_t\le k<2p_1$,
\begin{equation}
\label{boehoe}
a^*_{m n_1}(k)=u^*_{m}(k)-\mu^*(m)\sum_{j=1}^tu^*_{m}(k-p_j)=u^*_{m}(k)-\mu^*(m)tu^*_{m}(k-1).
\end{equation}
\indent  We
consider two cases depending on whether $\mu^*(m)=1$ or $\mu^*(m)=-1$.\\
{\bf Case 1}. $\mu^*(m)=1$. In this case $m$ has at least two
block divisors $>1$ (since by assumption $m>1$). Let $1<q_1<q_2$ be
the smallest, respectively second smallest block divisor $>1$ of $m.$
Note that both $q_1$ and $q_2$ are prime powers and so $\mu^*(q_i)=-1.$
Using \eqref{variation} we see that
\begin{eqnarray}
\label{eagle}
\frac{1}{\Phi^*_m(x)} & \equiv & \frac{(1-x^{q_1})(1-x^{q_2})}{1-x} ~({\rm
mod~}x^{q_2+2})\cr
& \equiv & 1+x+x^2+\cdots+x^{q_1-1}-x^{q_2}-x^{q_2+1} ~({\rm mod~}x^{q_2+2}).
\end{eqnarray}
Thus $u^*_m(k)=1$ if $k\equiv \beta ({\rm mod~}m)$ with $\beta\in \{0,1\}$
and $u^*_m(k)=-1$ if $k\equiv \beta ({\rm mod~}m)$ with $\beta\in \{q_2,q_2+1\}$.
This in combination with (\ref{boehoe}) shows that $a^*_{m n_1}(p_t)=1-t$.
Since $n\ge 8m\ge 8q_2$ we have $p_t+q_2<15n/8+n/8=2n<2p_1,$ and hence
we may apply \eqref{boehoe} with $k=p_t+q_2$ giving rise
to $a^*_{m n_1}(p_t+q_2)=t-1$. Since $\{1-t,t-1\ | \ t\ge 1\}=\mathbb Z$ the result
follows in this case.\\
{\bf Case 2}. $\mu^*(m)=-1$. Here we notice that
$$\frac{1}{\Phi^*_m(x)}\equiv 
\begin{cases}
1-x+x^2 ~({\rm mod~}x^3) & \text{~if~}m\equiv 2({\rm mod~}4);\cr
1-x ~({\rm mod~}x^3) & \text{~otherwise}.
\end{cases}
$$
Using this we find that $a^*_{m n_1}(p_t)=-1+t$. 
Furthermore, $a^*_{m n_1}(p_t+1)=1-t$ in case $m\equiv 2({\rm mod~}4)$
and $a^*_{m n_1}(p_t+1)=-t$ otherwise. Since
$\{-1+t,-t\ | \ t\ge 1\}=\mathbb Z$ and $\{-1+t,1-t\ | \ t\ge 1\}=\mathbb Z$,
it follows that also $\{a^*_{mn}(j):n\ge 1,~j\ge 0\}=\mathbb Z$ in this case.

It remains to show that  $\{c^*_{mn}(j):n\ge 1,~j\ge 0\}=\mathbb Z$. 
By Proposition \ref{allcoeff} we may assume that $m>1.$ Let $q$ be any prime exceeding $2p_1$ and put
$$n_2=
\begin{cases}
p_1p_2\cdots p_t & \text{~if~}t\text{~is~even};\cr
p_1p_2\cdots p_tq & \text{~otherwise}.
\end{cases}
$$
Using that $\mu^*(n_2)=-\mu^*(n_1),$ we see that
$$\Psi^*_{m n_2}(x)=
\frac{x^{m n_2}-1}{\Phi^*_{m n_2}(x)}\equiv -\frac{1}{\Phi^*_{m n_2}(x)}\equiv 
-\Phi^*_{m n_1}(x) ~({\rm
mod~}°x^{2p_1}),$$
and hence
$c^*_{m n_2}(k)=-a^*_{m n_1}(k)$ for $k<2p_1$. 
The proof is now completed by
reasoning 
as in the case of unitary cyclotomic coefficients using formula
\eqref{boehoe} (which is valid for 
$p_t\le k<2p_1$).
\end{proof}
\section{Connection with numerical semigroups}
Let $a_1,\ldots,a_m$ be positive integers, and let $S(a_1,\ldots,a_m)$ be the
set of all non-negative integer linear combinations of $a_1,\ldots,a_m$, that is,
$$S(a_1,\ldots,a_m)=\{x_1a_1+\ldots +x_m a_m~|~x_i\in \mathbb Z_{\ge 0}\}.$$
Then
$S$ is a {\it semigroup} (i.e., it is closed under addition). A semigroup $S$ is
said to be {\it numerical} if its complement $\mathbb Z_{\ge 0}\backslash S$ is finite. The numbers in this set are called \emph{gaps}.
It is easy to prove that $S(a_1,\ldots,a_m)$ is numerical if and ony if $a_1,\ldots,a_m$ 
are relatively prime. If $S$ is numerical,
the maximum gap is called the {\it Frobenius number} of $S$ and denoted
by $F(S)$. 
The {\it Hilbert series} of the numerical semigroup $S$ is the formal
power series $H_S(x)=\sum_{s\in S}x^s\in \mathbb Z[[x]].$ 
It is practical to multiply this by $1-x$ as we then obtain a {\it polynomial}, called
the {\it semigroup polynomial}:
\begin{equation}
\label{semigppoly}
P_S(x)=(1-x)H_S(x)=x^{F(S)+1}+(1-x)\sum_{0\le s\le F(S)\atop s\in S}x^s=1+(x-1)\sum_{s\not\in S}x^s.
\end{equation}
It is easy to see that the non-zero coefficients of $P_S$ alternate between 1 and $-1.$
{}From  $P_S$ one immediately reads off the
Frobenius number:
\begin{equation}
\label{frob}
F(S)={\rm deg}(P_S(x))-1.
\end{equation}
The following result is well-known, 
see, e.g., Bardomero and Beck \cite{Babeck}, Moree \cite{Mor2014} or 
Ram\'{\i}rez--Alfons\'{\i}n \cite[p. 34]{RA}. It seems to have been first proved
by Sz\'ekely and Wormald \cite{SW}. 
\begin{thm}
\label{basic}
If $a,b>1$ are coprime integers, then
$$P_{S(a,b)}(x)=(1-x)\sum_{s\in S(a,b)}x^s=\frac{(x^{ab}-1)(x-1)}{(x^a-1)(x^b-1)}.$$ 
\end{thm}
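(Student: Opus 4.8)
The plan is to pass to ordinary generating functions and count representations. The starting point is the formal power-series identity
\[
\frac{1}{(1-x^a)(1-x^b)}=\sum_{i,j\ge 0}x^{ia+jb}=\sum_{n\ge 0}r(n)\,x^n,
\]
where $r(n)=\#\{(i,j)\in\mathbb Z_{\ge 0}^2:ia+jb=n\}$ is the number of representations of $n$ by $a$ and $b$ (and $r(k)=0$ for $k<0$). Since $n\in S(a,b)$ is by definition equivalent to $r(n)\ge 1$, the whole theorem reduces to the refined coefficient identity
\[
r(n)-r(n-ab)=\begin{cases}1 & \text{if } n\in S(a,b),\\ 0 & \text{otherwise.}\end{cases}
\]
Granting this, I would multiply the series by $1-x^{ab}$ and telescope: $(1-x^{ab})\sum_n r(n)x^n=\sum_n\bigl(r(n)-r(n-ab)\bigr)x^n=\sum_{s\in S}x^s=H_S(x)$. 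Hence $H_S(x)=(1-x^{ab})/\bigl((1-x^a)(1-x^b)\bigr)$, and multiplying by $1-x$ together with the elementary sign identities $(1-x)(1-x^{ab})=(x-1)(x^{ab}-1)$ and $(1-x^a)(1-x^b)=(x^a-1)(x^b-1)$ delivers the asserted closed form for $P_S(x)$.

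To establish the refined identity I would use the shift bijection $(i,j)\mapsto(i+b,j)$, which sends representations of $n-ab$ to exactly those representations of $n$ whose first coordinate is $\ge b$ (its inverse is $(i,j)\mapsto(i-b,j)$). Thus $r(n-ab)$ equals the number of representations of $n$ with $i\ge b$, and so $r(n)-r(n-ab)$ counts precisely the representations $(i,j)$ of $n$ with $0\le i<b$. This is where coprimality enters decisively: because $(a,b)=1$, the congruence $ai\equiv n\pmod b$ has a unique solution $i_0\in\{0,\dots,b-1\}$, so there is at most one representation with $0\le i<b$, and it is a genuine nonnegative representation exactly when $j_0=(n-ai_0)/b\ge 0$. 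A short argument identifies this with membership in $S$: if $n\in S$, reducing the first coordinate of any representation modulo $b$ (transferring multiples of $b$ from $i$ into $j$, each costing $a$) produces a representation with $0\le i<b$ and $j\ge 0$; conversely, the mere existence of such a representation shows $n\in S$.

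The only genuinely delicate point, and the step I would watch most carefully, is this combinatorial claim $r(n)-r(n-ab)\in\{0,1\}$ and its identification with membership in $S(a,b)$; everything after it is formal manipulation of rational functions. It is the coprimality hypothesis $(a,b)=1$ that forces the uniqueness of the reduced representation, and I would verify the boundary cases explicitly: when $n<ab$ one has $r(n-ab)=0$, and when $n\notin S$ one must rule out $r(n-ab)>0$, which holds because $ab\in S$ would otherwise force $n=(n-ab)+ab\in S$, a contradiction. With these checks in place the theorem follows.
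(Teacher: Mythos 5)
Your proof is correct, but note that the paper itself does not prove Theorem \ref{basic} at all: it quotes the result as well-known, citing Bardomero--Beck, Moree, and Ram\'{\i}rez--Alfons\'{\i}n, and attributing the first proof to Sz\'ekely and Wormald. So the comparison is with the standard literature proof rather than with anything in the text. Your route --- counting representations $r(n)=\#\{(i,j)\in\mathbb Z_{\ge 0}^2: ia+jb=n\}$, showing via the shift bijection $(i,j)\mapsto (i+b,j)$ that $r(n)-r(n-ab)$ counts representations with $0\le i<b$, and then telescoping against $1-x^{ab}$ --- is sound, and the delicate step you flag (uniqueness of the reduced representation, which is exactly where $(a,b)=1$ enters) is handled correctly. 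The proof usually given in the cited sources is a more direct version of the same idea: since every $n\in S(a,b)$ is \emph{uniquely} of the form $ia+jb$ with $0\le i<b$ and $j\ge 0$, one writes
\begin{equation*}
H_S(x)=\sum_{i=0}^{b-1}\sum_{j\ge 0}x^{ia+jb}=\frac{1+x^a+\cdots+x^{(b-1)a}}{1-x^b}=\frac{1-x^{ab}}{(1-x^a)(1-x^b)},
\end{equation*}
and multiplies by $1-x$. Your argument buys nothing extra but costs a little more bookkeeping (the telescoping and the boundary checks for $n<ab$ and $n\notin S$ are absorbed into the direct parametrization above); both hinge on the identical coprimality fact, so yours is a legitimate, slightly longer rearrangement of the standard proof.
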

Using \eqref{frob} it follows that $F(S(a,b))=ab-a-b,$ something that was already known to Sylvester in
the 19th century. 

The next result is a consequence of
\eqref{Phi_star} and Theorem \ref{basic}.
\begin{theorem}
\label{twoprimepowers}
Let $p$ and $q$ be coprime prime powers $>1$. We have
$P_{S(p,q)}(x)=\Phi^*_{pq}(x).$ 
\end{theorem}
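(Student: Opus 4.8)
The plan is to compute both sides explicitly and observe that they coincide as rational functions. First I would enumerate the block divisors of $pq$. Since $p$ and $q$ are coprime prime powers, the only divisors $d$ of $pq$ satisfying $(d,pq/d)=1$ are $1$, $p$, $q$ and $pq$: for each of these the complementary factor shares no prime with $d$, whereas any other divisor (which can only arise if $p$ and $q$ were powers of the same prime) would fail the coprimality test. Thus $\{d:d\mid\mid pq\}=\{1,p,q,pq\}$, and this is exactly the place where the hypothesis that $p,q$ are \emph{coprime} is used.

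Next I would evaluate $\mu^*$ on these four divisors. Recalling that $\mu^*(n)=(-1)^{\omega(n)}$, and that $p$ and $q$ each have a single prime factor while $pq$ has two, we obtain $\mu^*(1)=\mu^*(pq)=1$ and $\mu^*(p)=\mu^*(q)=-1$.

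Then I would substitute into the product formula \eqref{Phi_star}, in the form $\Phi^*_n(x)=\prod_{d\mid\mid n}(x^{n/d}-1)^{\mu^*(d)}$ taken at $n=pq$. The four resulting factors are $(x^{pq}-1)^{1}$, $(x^{q}-1)^{-1}$, $(x^{p}-1)^{-1}$ and $(x-1)^{1}$, so that
$$\Phi^*_{pq}(x)=\frac{(x^{pq}-1)(x-1)}{(x^{p}-1)(x^{q}-1)}.$$
Comparing this with Theorem \ref{basic}, which gives precisely the same expression for $P_{S(p,q)}(x)$, yields the claimed identity.

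There is essentially no real obstacle in this argument: it is a direct unwinding of the two defining formulas. The only point that requires a moment of care is the verification that the block divisors are exactly $\{1,p,q,pq\}$, since it is this combinatorial fact, together with the signs of $\mu^*$, that makes the four factors match the numerator and denominator of the Székely--Wormald expression. Everything else is a formal cancellation.
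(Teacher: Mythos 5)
Your proof is correct and follows essentially the same route as the paper, which obtains the theorem precisely by combining the product formula \eqref{Phi_star} (evaluated over the block divisors $1,p,q,pq$ of $pq$) with Theorem \ref{basic}. The only blemish is the parenthetical remark that non-unitary divisors ``can only arise if $p$ and $q$ were powers of the same prime'' --- in fact they arise whenever $p$ or $q$ is a proper prime power (e.g.\ $d=2$ for $p=4$, $q=3$), but such divisors do fail the condition $(d,pq/d)=1$, so your enumeration and the rest of the argument stand.
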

\begin{cor}
\label{eight}
We have
$$a^*_{pq}(k)=
\begin{cases}
1 &\text{if~}k\in S(p,q),~k-1\not\in S(p,q);\cr
-1 &\text{if~}k\not\in S(p,q),~k-1\in S(p,q);\cr
0 & otherwise.
\end{cases}
$$
\end{cor}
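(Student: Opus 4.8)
The plan is to read off the corollary directly from Theorem~\ref{twoprimepowers} together with the explicit description of the semigroup polynomial coefficients given in \eqref{semigppoly}. Since $p$ and $q$ are coprime prime powers $>1$, Theorem~\ref{twoprimepowers} gives $\Phi^*_{pq}(x)=P_{S(p,q)}(x)$, so the coefficients $a^*_{pq}(k)$ coincide with the coefficients of $P_{S(p,q)}$ for every $k$. Hence it suffices to extract the coefficient of $x^k$ from the rightmost expression in \eqref{semigppoly}.

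First I would abbreviate $S=S(p,q)$ and start from the identity $P_S(x)=1+(x-1)\sum_{s\not\in S}x^s$ appearing in \eqref{semigppoly}. Writing the gap sum as $G(x)=\sum_{s\not\in S}x^s$, the coefficient of $x^k$ in $(x-1)G(x)$ is the coefficient of $x^{k-1}$ in $G$ minus the coefficient of $x^k$ in $G$; that is, $[\,k-1\not\in S\,]-[\,k\not\in S\,]$ using Iverson-bracket notation (an indicator that is $1$ when the condition holds and $0$ otherwise). Adding the constant term contribution, which only affects $k=0$, I would then match this against the three cases. Since $S$ is a numerical semigroup with $0\in S$, the case $k=0$ is subsumed: $0\in S$ and $-1\not\in S$ (the latter vacuously, as gaps are non-negative), so the formula correctly yields $a^*_{pq}(0)=1$.

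The case analysis is then purely Boolean. When $k\in S$ and $k-1\not\in S$, the difference $[\,k-1\not\in S\,]-[\,k\not\in S\,]$ equals $1-0=1$. When $k\not\in S$ and $k-1\in S$, it equals $0-1=-1$. In the two remaining situations, namely both $k,k-1\in S$ or both $k,k-1\not\in S$, the two indicators cancel and the coefficient is $0$. This exhausts the four combinations of membership of $k$ and $k-1$ and reproduces exactly the three cases in the statement, with the ``otherwise'' branch collecting the two cancelling possibilities.

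There is essentially no obstacle here; the only point requiring a word of care is the boundary behavior at small $k$, where one must confirm that the added constant $1$ in \eqref{semigppoly} does not disturb the general pattern and that negative exponents contribute nothing. Since every gap is a non-negative integer, $-1\not\in S$ holds trivially, so the uniform formula is valid for all $k\ge 0$. The remark in the excerpt that the non-zero coefficients of $P_S$ alternate between $1$ and $-1$ serves as a useful consistency check: the corollary shows each sign change is triggered by a transition in membership, and consecutive gaps or consecutive semigroup elements produce no coefficient, which is precisely what forces the alternation.
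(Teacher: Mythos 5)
Your proof is correct and coincides with the paper's own (implicit) argument: Corollary \ref{eight} is stated there without proof precisely because it follows by reading off coefficients from Theorem \ref{twoprimepowers} together with the expansion \eqref{semigppoly}, which is exactly what you carry out, including the needed check at $k=0$. Note only that starting from the middle expression $(1-x)H_S(x)$ in \eqref{semigppoly} gives the coefficient of $x^k$ directly as $[k\in S(p,q)]-[k-1\in S(p,q)]$ for all $k\ge 0$, which avoids the boundary care that your gap-sum form requires.
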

\begin{cor}
In case $p$ and $q$ are distinct primes, we have
$P_{S(p,q)}(x)=\Phi_{pq}(x).$
\end{cor}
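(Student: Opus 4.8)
The plan is to observe that this corollary is an immediate specialization of Theorem \ref{twoprimepowers} combined with Corollary \ref{cyclo=uni}. First I would note that when $p$ and $q$ are distinct primes, they are in particular coprime prime powers exceeding $1$, so the hypotheses of Theorem \ref{twoprimepowers} are satisfied verbatim. Applying that theorem yields $P_{S(p,q)}(x)=\Phi^*_{pq}(x)$ directly.

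Next I would invoke the fact that $pq$, being a product of two distinct primes, is square-free. By Corollary \ref{cyclo=uni} the unitary cyclotomic polynomial then coincides with the ordinary cyclotomic polynomial, that is, $\Phi^*_{pq}(x)=\Phi_{pq}(x)$. Chaining the two equalities gives $P_{S(p,q)}(x)=\Phi_{pq}(x)$, as desired.

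Since both ingredients are already established earlier in the paper, there is no genuine obstacle here; the only thing to verify is that the hypotheses of the two cited results are met, which is immediate. One could alternatively give a self-contained derivation by specializing Theorem \ref{basic} to $a=p$ and $b=q$, obtaining $P_{S(p,q)}(x)=(x^{pq}-1)(x-1)/\big((x^p-1)(x^q-1)\big)$, and then comparing with the expression for $\Phi_{pq}(x)$ that results from \eqref{Phi} when $n=pq$ is a product of distinct primes; this reproduces the same rational function. Routing through Theorem \ref{twoprimepowers} and Corollary \ref{cyclo=uni} is nonetheless the cleaner presentation.
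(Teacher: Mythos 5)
Your proposal is correct and matches the paper's intended argument: the corollary follows immediately by combining Theorem \ref{twoprimepowers} with Corollary \ref{cyclo=uni}, since $pq$ is square-free when $p$ and $q$ are distinct primes. The alternative route you sketch via Theorem \ref{basic} and \eqref{Phi} is also valid, but it is the same computation the paper has already packaged into those two results.
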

The interpretation of $\Phi_{pq}(x)$ as a semigroup polynomial leads to 
trivial proofs of very classical facts about these so-called binary
cyclotomic polynomials. E.g., that they are of height 1
(which was
first proved by Migotti \cite{Migotti} and 
several years later by Bang \cite{Bang}) and that the non-zero coefficients 
alternate between 1 and -1 (due to Carlitz \cite{Canumberofterms}).

The polynomial $\Psi^*_{ab}(x)$ with $a<b$ coprime
prime powers, in contrast to $\Phi^*_{ab}(x),$ is boring: 
\begin{equation}
\label{psistarflat}
\Psi^*_{ab}(x)=\frac{(x^a-1)(x^b-1)}{(x-1)}=-1-x-\cdots-x^{a-1}+x^{b}+\cdots+x^{a+b-1}.
\end{equation}

For further reading on the connection between numerical semigroups and cyclotomic polynomials
the reader is referred to Moree \cite{Mor2014}.

\section{Connection 
with inclusion-exclusion
polynomials}
Let $\rho=\{r_1,r_2,\ldots,r_s\}$ be a set of pairwise coprime natural numbers 
$>1$ and put
$$n_0=\prod_i r_i,~n_i=\frac{n_0}{r_i},~n_{ij}=\frac{n_0}{r_ir_j}~[i\ne j],\ldots ,$$
and define
\begin{equation}
\label{Qformula}
Q_{\rho}(x):=\frac{(x^{n_0}-1)\cdot \prod_{i<j}(x^{n_{ij}}-1)\cdots}
{\prod_i (x^{n_i}-1)\cdot \prod_{i<j<k}(x^{n_{ijk}}-1)\cdots}.
\end{equation}
It can be shown that $Q_{\rho}(x)$ is a polynomial with \emph{integer}
coefficients. 
This class of polynomials was introduced by 
Bachman \cite{Ba}, who 
named them \emph{inclusion-exclusion polynomials}. 
From the definition and Theorem \ref{twoprimepowers} we infer that
$P_{S(a,b)}(x)=Q_{\{a,b\}}(x).$

Let $n>1$ be an integer and 
$\prod_{i=1}^t p_i^{e_i}$ its canonical
factorization. Comparison of
\eqref{Phi_star} and 
\eqref{Qformula} then shows that 
\begin{equation}
\label{gelijk}
\Phi_n^*(x)=Q_{\{p_1^{e_1},\ldots, p_t^{e_t}\}}(x).
\end{equation}
We will now derive some consequences of this identity in the ternary case $t=3.$
One of the tools that can be used here is a fundamental lemma of Kaplan 
\cite{Kaplan} relating the case $t=3$ to the case $t=2$  (he 
formulated it for cyclotomic polynomials). 

Given a polynomial $f,$ we let ${\cal C}(f)$ denote
the set of all coefficients of
$f$ and $H(f)$ the maximum
element (in absolute value) in
 ${\cal C}(f)$. Combination of \eqref{gelijk} and 
 \cite[Theorem 3]{Ba} leads to the first assertion
 below. Combination of \eqref{gelijk} and 
 \cite[Theorem]{BaMo} leads to the second assertion.
 \begin{thm}
 \label{Gennady}
Let
$p,q,r,s\ge 3$ be four pairwise coprime
prime powers. Then
${\cal C}(\Phi^*_{pqr})$ is a string of consecutive integers, and for 
$r,s>\max(p,q),$ we have

$${\cal C}(\Phi^*_{pqr}) =
\begin{cases}
{\cal C}(\Phi^*_{pqs}) & \text{~if~}r\equiv s\,({\rm mod~}pq);\cr
-{\cal C}(\Phi^*_{pqs}) & 
\text{~if~}r\equiv -s\,({\rm mod~}pq).
\end{cases}
$$
If $r\equiv \pm s\,({\rm mod~}pq)$
and $r>\max(p,q)>s\ge 3,$ then
\begin{equation}
\label{nietgelijk}
H(\Phi^*_{pqs})\le H(\Phi^*_{pqr})\le H(\Phi^*_{pqs})+1.
\end{equation}
\end{thm}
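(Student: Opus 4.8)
The plan is to transport the entire statement to Bachman's inclusion-exclusion polynomials via the identity \eqref{gelijk} and then to quote the cited structural results. Since $p,q,r$ are pairwise coprime prime powers, \eqref{gelijk} gives $\Phi^*_{pqr}(x)=Q_{\{p,q,r\}}(x)$, and likewise $\Phi^*_{pqs}(x)=Q_{\{p,q,s\}}(x)$. All three assertions thereby become statements about the ternary polynomials $Q_{\{p,q,r\}}$, and the key structural input is that the binary building block $Q_{\{p,q\}}=\Phi^*_{pq}$ is flat: by Theorem \ref{twoprimepowers} and Corollary \ref{eight} its coefficients lie in $\{-1,0,1\}$ and its non-zero coefficients alternate in sign.

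First I would prove that ${\cal C}(\Phi^*_{pqr})$ is a string of consecutive integers, which is \cite[Theorem 3]{Ba} transported through \eqref{gelijk}. To see why it should hold, and to reprove it from scratch, I would use Kaplan's lemma, which for fixed $p,q$ expresses the coefficients of $Q_{\{p,q,r\}}$ in terms of those of the binary polynomial $\Phi^*_{pq}$, with the dependence on $r$ entering only through the residue $r\bmod pq$. The technical heart would be to show that, as a consequence, the bi-infinite coefficient sequence of the ternary polynomial moves by unit steps (it vanishes outside the degree range and has constant term $1$); granting such a jump-by-one estimate, a discrete intermediate-value argument forces every integer between the extreme coefficients to be attained, which is exactly the consecutive-integers statement.

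Next, for the coefficient-set identities I would appeal to \cite[Theorem]{BaMo}. In the regime $r,s>\max(p,q)$ the Kaplan reduction depends on $r$ only through $r\bmod pq$, which immediately yields ${\cal C}(\Phi^*_{pqr})={\cal C}(\Phi^*_{pqs})$ when $r\equiv s\pmod{pq}$. For $r\equiv -s\pmod{pq}$ I would combine this periodicity with the self-reciprocity of the ternary polynomial: reversing the residue reverses the coefficient sequence and introduces an overall sign, so that the coefficient sequence of $\Phi^*_{pqs}$ is, up to order, the negative of that of $\Phi^*_{pqr}$, giving ${\cal C}(\Phi^*_{pqr})=-{\cal C}(\Phi^*_{pqs})$ at the level of sets. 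The height inequality \eqref{nietgelijk} in the mixed regime $r>\max(p,q)>s\ge 3$ is the quantitative refinement in \cite{BaMo}: when $s$ drops below $\max(p,q)$ the clean periodicity is lost, but the same reduction shows that lowering $s$ perturbs each coefficient by at most $1$, whence $H(\Phi^*_{pqs})\le H(\Phi^*_{pqr})\le H(\Phi^*_{pqs})+1$.

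Since the hard analytic work lives in \cite{Ba} and \cite{BaMo}, the main obstacle is not a new idea but careful bookkeeping under the translation \eqref{gelijk}, together with, for a from-scratch proof, establishing the jump-by-one estimate underlying the consecutive-integers claim, which is the genuinely hard input of \cite{Ba}. I would need to check that the hypotheses match: pairwise coprimality is automatic for distinct prime powers, the constraint that each parameter be $\ge 3$ excludes the exceptional value $2$ typically barred in these theorems, and the orderings $r,s>\max(p,q)$ and $r>\max(p,q)>s$ are precisely the ranges in which the comparison and height statements are formulated. The single genuinely delicate point is aligning the sign conventions and the direction of the congruence $r\equiv\pm s\pmod{pq}$ with those of \cite{BaMo}, so that the $+$ case produces equality of coefficient sets and the $-$ case produces negation rather than the reverse.
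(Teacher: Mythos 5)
Your proposal is correct and follows essentially the same route as the paper: the paper's entire justification is that the first assertion follows by combining \eqref{gelijk} with \cite[Theorem 3]{Ba}, and the second (the coefficient-set identities and the height inequality \eqref{nietgelijk}) by combining \eqref{gelijk} with \cite[Theorem]{BaMo}, which is exactly your translation-plus-citation argument. Your additional sketch of how Kaplan's lemma and a jump-by-one estimate would reprove Bachman's results from scratch goes beyond what the paper does, but the load-bearing step is identical.
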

The following is a consequence of Kaplan's work, cf. \cite[(4)]{BaMo}.
\begin{cor}
Let  $p^a$  and  $q^b$  be two fixed coprime prime powers and let  $r$  be a third prime. 
Then  $\Phi^*_{ p^a q^b r^c }$  is flat for every positive exponent  $c$  with  
$r^c \equiv \pm 1 \,({\rm mod~}p^a q^b).$
\end{cor}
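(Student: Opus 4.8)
The plan is to reduce the ternary unitary polynomial to a ratio of binary ones and then invoke the inclusion--exclusion form of Kaplan's lemma. Write $P=p^a$, $Q=q^b$, $R=r^c$, so that $P,Q,R$ are pairwise coprime prime powers and, by hypothesis, $R\equiv\pm1\pmod{PQ}$. By \eqref{gelijk} we have $\Phi^*_{PQR}=Q_{\{P,Q,R\}}$, and writing out the three--element product \eqref{Qformula} and grouping its factors as
\[
Q_{\{P,Q,R\}}(x)=\frac{(x^{PQR}-1)(x^{R}-1)}{(x^{PR}-1)(x^{QR}-1)}\cdot\frac{(x^{P}-1)(x^{Q}-1)}{(x^{PQ}-1)(x-1)},
\]
the substitution $y=x^{R}$ in the first factor, together with Theorem~\ref{twoprimepowers}, identifies it as $\Phi^*_{PQ}(x^{R})$, while the second factor is $1/\Phi^*_{PQ}(x)$. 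Hence I would first record the key identity $\Phi^*_{PQR}(x)=\Phi^*_{PQ}(x^{R})/\Phi^*_{PQ}(x)$. Here $\Phi^*_{PQ}=P_{S(P,Q)}$ is flat, its nonzero coefficients alternating between $+1$ and $-1$ by Corollary~\ref{eight}; this is the flat object against which the ternary coefficients are measured.

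Next I would expose the mechanism. Expanding $1/\Phi^*_{PQ}(x)=\sum_j u^*_{PQ}(j)x^j$ as in the proof of Theorem~\ref{main} (where $u^*_{PQ}(j)$ depends only on $j\bmod PQ$ and is read off from $-\Psi^*_{PQ}$ via \eqref{psistarflat}), the identity gives
\[
a^*_{PQR}(k)=\sum_{i\ge 0}a^*_{PQ}(i)\,u^*_{PQ}(k-iR).
\]
This is precisely the convolution evaluated by the inclusion--exclusion analogue of Kaplan's lemma, \cite[(4)]{BaMo}. The point is that the full, periodised convolution of $\Phi^*_{PQ}$ with $1/\Phi^*_{PQ}$ vanishes modulo $x^{PQ}-1$ (since $\Phi^*_{PQ}(x)\bigl(-\Psi^*_{PQ}(x)\bigr)=1-x^{PQ}$), so each $a^*_{PQR}(k)$ collapses to a short boundary sum whose indices are controlled by $k$ and by the inverse of $R$ modulo $PQ$. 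When $R\equiv 1\pmod{PQ}$ this inverse is again $1$, the boundary indices coincide up to the unit shift built into the alternating pattern of $\Phi^*_{PQ}$, and the sum reduces to at most one coefficient $a^*_{PQ}(\cdot)\in\{-1,0,1\}$. Thus $\Phi^*_{PQR}$ is flat when $R\equiv1\pmod{PQ}$.

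For $R\equiv-1\pmod{PQ}$ I would deduce flatness from the case just treated. By Dirichlet's theorem choose an auxiliary prime $s\equiv1\pmod{PQ}$ with $s>\max(P,Q)$ and $s\ne r$; then $R\equiv-s\pmod{PQ}$, so the sign--symmetry in Theorem~\ref{Gennady} yields ${\cal C}(\Phi^*_{PQR})=-{\cal C}(\Phi^*_{PQs})$. Since $\Phi^*_{PQs}$ is flat by the $+1$ case, so is $\Phi^*_{PQR}$. When $P$ or $Q$ is a power of $2$, Theorem~\ref{Gennady} does not apply verbatim, but the $-1$ case is then handled directly by \cite[(4)]{BaMo}, whose formula is symmetric under $R\mapsto -R$.

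The routine parts are the identity of the first paragraph and the bookkeeping of $u^*_{PQ}$. The substantive step---and the one I expect to be the main obstacle---is the \emph{index collapse}: showing that for $R\equiv\pm1$ the boundary sum in \cite[(4)]{BaMo} genuinely reduces to a single flat coefficient rather than accumulating several $\pm1$ contributions. Concretely this amounts to checking that the partial sums of the sequence $i\mapsto a^*_{PQ}(i)\,u^*_{PQ}(k-i)$ remain in $\{-1,0,1\}$, a finite statement about the semigroup $S(P,Q)$ and the residues $\pm1$ modulo $PQ$; carrying this out directly, rather than quoting \cite[(4)]{BaMo}, is where the real work lies.
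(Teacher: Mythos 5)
Your reduction is correct as far as it goes: the grouping of \eqref{Qformula} does yield the identity $\Phi^*_{PQR}(x)=\Phi^*_{PQ}(x^{R})/\Phi^*_{PQ}(x)$, and the convolution formula $a^*_{PQR}(k)=\sum_{i}a^*_{PQ}(i)\,u^*_{PQ}(k-iR)$, together with the $PQ$-periodicity of $u^*_{PQ}$, is the right starting point for a Kaplan-style argument. But the proof stops exactly where the theorem is. The claim that for $R\equiv\pm1\pmod{PQ}$ the truncated sum ``reduces to at most one coefficient'' is not justified, and it is false as a description of the mechanism: already for $P=3$, $Q=5$, $R=31\equiv 1\pmod{15}$ and $k=200$ the sum $\sum_{0\le i\le k/R}a^*_{PQ}(i)\,u^*_{PQ}(k-i)$ has four nonzero terms, namely $-1+1-1+1=0$. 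Flatness arises from cancellation among many $\pm1$ contributions, and proving that this cancellation always leaves a value in $\{-1,0,1\}$ is precisely the content of Kaplan's theorem \cite{Kaplan} and of its extension to pairwise coprime prime powers in \cite{Ba,BaMo}; that is a genuine multi-page argument using the structure of $S(P,Q)$, not bookkeeping. You concede this yourself (``carrying this out directly \ldots{} is where the real work lies''), so by your own account the central step is missing. The $R\equiv-1$ case inherits the same gap, since your reduction via Theorem~\ref{Gennady} and an auxiliary prime $s\equiv1\pmod{PQ}$ needs flatness of $\Phi^*_{PQs}$, i.e., the unproven $+1$ case.

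For comparison, the paper's proof is a one-liner, and you were one step away from reproducing it: by \eqref{gelijk} we have $\Phi^*_{p^aq^br^c}=Q_{\{p^a,q^b,r^c\}}$, and \cite[(4)]{BaMo} is invoked as the known statement that a ternary inclusion-exclusion polynomial $Q_{\{p,q,r\}}$ is flat whenever $r\equiv\pm1\pmod{pq}$ --- Kaplan's theorem transported to the inclusion-exclusion setting, where it applies to prime powers (and indeed to arbitrary pairwise coprime numbers), with both signs handled at once. Had you quoted \cite[(4)]{BaMo} in that form, both cases would be finished immediately, with no need for the convolution analysis or for Theorem~\ref{Gennady}. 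Instead you read \cite[(4)]{BaMo} as a coefficient formula and set out to re-derive the flatness conclusion from it; that is a legitimate and more self-contained plan, but it obliges you to actually carry out the Kaplan--Bachman--Moree analysis, which the proposal does not do.
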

See subsection \ref{numsub} some numerical material demonstrating Theorem \ref{Gennady}.

\subsection{Ternary inverse unitary cyclotomic polynomials}
It seems that most (but not all!) of the work on $\Psi_{pqr}$ can be easily adapted to the 
inverse unitary setting. We merely give one example here.
\begin{theorem}
\label{upper}
Let $p<q<r$ pairwise coprime prime powers. We have
$$H(\Psi^*_{pqr})\le\Big[\frac{(p-1)(q-1)}{r}\Big]+1\le p-1.$$
\end{theorem}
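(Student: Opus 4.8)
The plan is to reduce the height bound to a one-line counting argument by splitting $\Psi^*_{pqr}$ into two manifestly flat factors. First I would record the explicit shape of $\Phi^*_{pqr}$ coming from \eqref{Phi_star} (equivalently from \eqref{gelijk} and \eqref{Qformula}): listing the eight unitary divisors of $pqr$ with their values of $\mu^*$ gives
\[
\Phi^*_{pqr}(x)=\frac{(x^{pqr}-1)(x^p-1)(x^q-1)(x^r-1)}{(x^{pq}-1)(x^{pr}-1)(x^{qr}-1)(x-1)},
\]
and dividing $x^{pqr}-1$ by this yields
\[
\Psi^*_{pqr}(x)=\frac{(x-1)(x^{pq}-1)(x^{pr}-1)(x^{qr}-1)}{(x^p-1)(x^q-1)(x^r-1)}.
\]
A quick degree check ($pq+pr+qr-p-q-r+1=pqr-(p-1)(q-1)(r-1)=\deg\Psi^*_{pqr}$) confirms the bookkeeping.

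The crucial step is to group this rational function as
\[
\Psi^*_{pqr}(x)=\underbrace{\frac{(x^{pq}-1)(x-1)}{(x^p-1)(x^q-1)}}_{P_{S(p,q)}(x)}\cdot\underbrace{\frac{(x^{pr}-1)(x^{qr}-1)}{x^r-1}}_{\Psi^*_{pq}(x^r)},
\]
recognising the first factor as $P_{S(p,q)}(x)$ via Theorem \ref{basic} and the second as $\Psi^*_{pq}(x^r)$ by the substitution $x\mapsto x^r$ in \eqref{psistarflat}. It is precisely the decision to pull out the \emph{largest} prime power $r$ that will place $r$ in the denominator of the final bound. Both factors are flat: $P_{S(p,q)}$ has coefficients in $\{-1,0,1\}$ (its nonzero coefficients in fact alternate in sign, though I will only need flatness) and support contained in $[0,F(S(p,q))+1]=[0,(p-1)(q-1)]$, while \eqref{psistarflat} gives
\[
\Psi^*_{pq}(x^r)=-\sum_{i=0}^{p-1}x^{ir}+\sum_{i=q}^{q+p-1}x^{ir},
\]
which is supported on multiples of $r$ with all coefficients in $\{-1,0,1\}$.

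Now I would extract coefficients. Writing $P_{S(p,q)}(x)=\sum_a s_a x^a$ with $s_a\in\{-1,0,1\}$ and $\Psi^*_{pq}(x^r)=\sum_i\varepsilon_i x^{ir}$ with $\varepsilon_i\in\{-1,0,1\}$, the product gives $c^*_{pqr}(n)=\sum_i\varepsilon_i\,s_{n-ir}$. Each summand lies in $\{-1,0,1\}$, and $s_{n-ir}\neq0$ forces $0\le n-ir\le(p-1)(q-1)$; the integers $i$ satisfying this constraint fill a real interval of length $(p-1)(q-1)/r$ and hence number at most $\lfloor(p-1)(q-1)/r\rfloor+1$. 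Thus $|c^*_{pqr}(n)|\le\lfloor(p-1)(q-1)/r\rfloor+1$ for every $n$, which is the first inequality. The second is immediate: from $q<r$ we get $(q-1)/r<1$, so $(p-1)(q-1)/r<p-1$ and therefore $\lfloor(p-1)(q-1)/r\rfloor\le p-2$.

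The only genuine content is the factorization in the second step; everything after it is the elementary remark that a product of two flat polynomials, one of them supported on an arithmetic progression of common difference $r$, has height bounded by the number of terms of that progression meeting the length-$(p-1)(q-1)$ support of the other factor. I therefore expect the main thing to get right to be the bookkeeping that identifies the two groupings with $P_{S(p,q)}$ and $\Psi^*_{pq}(x^r)$ and the verification of their coefficient ranges; the counting bound itself is routine, and the second inequality is a triviality once $q<r$ is used.
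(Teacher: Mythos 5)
Your proof is correct and follows essentially the same route as the paper: the factorization $\Psi^*_{pqr}(x)=P_{S(p,q)}(x)\cdot\Psi^*_{pq}(x^r)$ is exactly the paper's $\Psi^*_{pqr}(x)=\Phi^*_{pq}(x)\Psi^*_{pq}(x^r)$ (since $P_{S(p,q)}=\Phi^*_{pq}$ by Theorem \ref{twoprimepowers}), and your counting of the at most $\bigl[\frac{(p-1)(q-1)}{r}\bigr]+1$ indices $i$ with $0\le n-ir\le(p-1)(q-1)$, using flatness of both factors, is the paper's argument verbatim. The only cosmetic difference is that you derive the factorization by explicit manipulation of the product over unitary divisors, where the paper simply cites \eqref{Phi} and \eqref{Phi_star}.
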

\begin{proof} 
Using \eqref{Phi} and \eqref{Phi_star} we see that
$\Psi^*_{pqr}(x)=\Phi^*_{pq}(x)\Psi^*_{pq}(x^r),$
and so
\begin{equation}
\label{startie}
c^*_{pqr}(k)=\sum_{j=0}^{[\frac{k}{r}]}a^*_{pq}(k-jr)c^*_{pq}(j).
\end{equation}
The number of $j$ for which $0\le k-jr\le \varphi^*(pq),$ and so
$a^*_{pq}(k-jr)$ is potentially non-zero, is at most
$$\Big[\frac{\varphi^*(pq)}{r}\Big]+1=\Big[\frac{(p-1)(q-1)}{r}\Big]+1\le p-2+1=p-1.$$
Since $|a^*_{pq}(k-jr)|\le 1$ by 
Corollary \ref{eight} and 
$|c^*_{pq}(j)|\le 1$ by \eqref{psistarflat}, the proof is concluded.
\end{proof}

\section{Some numerical data}
\label{sec:numerics}
Let $n\ge 1$ and 
let $f_n\in \{\Phi_n,\Psi_n,\Phi^*_n,\Psi^*_n\},$ with its coefficients
denoted as in \eqref{coeffdef}.
For a given integer
$m\ge 2,$ we list the smallest $n$ such that 
$H(f_n)=m,$ 
with $H(f)$ the maximum
coefficient (in absolute value) of $f$.
In
addition we list the
degree of $f_{n},$ the smallest $k$ such
that $|f_{n}(k)|=m,$ and the value of  $f_{n}(k)$.\\

\centerline{{\bf Table 1:} {\tt ($\Phi_n$) Minimal $n$ and $k$ with $|a_n(k)|=m$}}
\begin{center}
\begin{tabular}{|c|l|c|c|c|c|}
\hline
$m$    & $n$ & {\rm deg}$(\Phi_{n})$ & $k$  & $a_{n}(k)$\\
\hline
$2$    & $105=3\cdot5\cdot 7$ & $48$ & $7$ & $-2$\\
\hline
$3$ & $385=5\cdot 7\cdot 11$ & $240$ & $119$ & $-3$\\
\hline
$4$ & $1365=3\cdot 5\cdot7\cdot 13$ & $576$ & $196$ & $-4$\\
\hline
$5$  & $1785=3\cdot 5\cdot7\cdot 17$ & $768$ & $137$ & $+5$\\
\hline
$6$  & $2805=3\cdot 5\cdot 11\cdot 17$ & $1280$ & $573$ & $-6$\\
\hline
$7$  & $3135=3\cdot 5\cdot 11\cdot 19$ & $1440$ & $616$ & $+7$\\
\hline
$8$  & $6545=5\cdot 7\cdot 11\cdot 17$ & $3840$ & $1528$ & $-8$\\
\hline
$9$  & $6545=5\cdot 7\cdot 11\cdot 17$ & $3840$ & $1914$ & $+9$\\
\hline
$10$ & $10465=5\cdot 7\cdot 13\cdot 23$ & $6336$ & $1196$ & $-10$\\
\hline
$11$  & $10465=5\cdot 7\cdot 13\cdot 23$ & $6336$  & $1916$ & $-11$\\
\hline
\end{tabular}
\end{center}
For $m=10,\ldots,14$ it turns out that $n=10465.$
\vfil\eject
\centerline{{\bf Table 2:} {\tt ($\Psi_n$) Minimal $n$ and $k$ with $|c_n(k)|=m$}}
\begin{center}
\begin{tabular}{|c|c|c|c|c|c|}
\hline
$m$    & $n$ & {\rm deg}$(\Psi_{n})$ & $k$  & $c_{n}(k)$\\
\hline
$2$    & $561=3\cdot11\cdot 17$ & $241$ & $17$ & $-2$\\
\hline
$3$ & $1155=3\cdot 5\cdot 7\cdot 11$ & $675$ & $33$ & $-3$\\
\hline
$4$ & $2145=3\cdot 5\cdot 11\cdot 13$ & $1185$ & $44$ & $+4$\\
\hline
$5$  & $3795=3\cdot 5\cdot 11\cdot 23$ & $2035$ & $132$ & $-5$\\
\hline
$6$  & $5005=5\cdot 7\cdot 11\cdot 13$ & $2125$ & $201$ & $-6$\\
\hline
$7$  & $5005=5\cdot 7\cdot 11\cdot 13$ & $2125$ & $310$ & $-7$\\
\hline
$8$  & $8645=5\cdot 7\cdot 13\cdot 19$ & $3461$ & $227$ & $-8$\\
\hline
$9$  & $8645=5\cdot 7\cdot 13\cdot 19$ & $3461$ & $240$ & $+9$\\
\hline
$10$  & $11305=5\cdot 7\cdot 17\cdot 19$ & $4393$ & $240$ & $-10$\\
\hline
$11$  & $11305=5\cdot 7\cdot 17\cdot 19$ & $4393$ & $306$ & $+11$\\
\hline
\end{tabular}
\end{center}
For $m=10,\ldots,21$ it turns out that $n=11305$.\\

\centerline{{\bf Table 3:} {\tt $(\Phi^*_n)$ Minimal $n$ and $k$ with $|a^*_n(k)|=m$}}
\begin{center}
\begin{tabular}{|c|l|c|c|c|c|}
\hline
$m$    & $n$ & {\rm deg}$(\Phi_{n}^*)$ & $k$  & $a^*_{n}(k)$\\
\hline
$2$    & $60=2^2\cdot3\cdot 5$ & $24$ & $5$ & $-2$\\
\hline
$3$ & $385=5\cdot 7\cdot 11$ & $240$ & $119$ & $-3$\\
\hline
$4$ & $780=2^2\cdot3\cdot 5\cdot 13$ & $288$ & $78$ & $-4$\\
\hline
$5$  & $1320=2^3\cdot 3\cdot 5\cdot 11$ & $560$ & $107$ & $-5$\\
\hline
$6$  & $1320=2^3\cdot 3\cdot 5\cdot 11$ & $560$ & $111$ & $+6$\\
\hline
$7$  & $1320=2^3\cdot 3\cdot 5\cdot 11$ & $560$ & $210$ & $-7$\\
\hline
$8$  & $1320=2^3\cdot 3\cdot 5\cdot 11$ & $560$ & $213$ & $-8$\\
\hline
$9$  & $3640=2^3\cdot 5\cdot 7\cdot 13$ & $2016$ & $626$ & $-9$\\
\hline
$10$ & $3640=2^3\cdot 5\cdot 7\cdot 13$ & $2016$ & $648$ & $+10$\\
\hline
$11$  & $3640=2^3\cdot 5\cdot 7\cdot 13$ & $2016$ & $748$ & $+11$\\
\hline
$12$  & $3640=2^3\cdot 5\cdot 7\cdot 13$ & $2016$ & $761$ & $+12$\\
\hline
$13$  & $4620=2^2\cdot 3\cdot 5\cdot 7\cdot 11$ & $1440$ & $386$ & $-13$\\
\hline
$14$  & $4620=2^2\cdot 3\cdot 5\cdot 7\cdot 11$ & $1440$ & $419$ & $-14$\\
\hline
$15$  & $4620=2^2\cdot 3\cdot 5\cdot 7\cdot 11$ & $1440$ & $425$ & $+15$\\
\hline
$16$  & $4620=2^2\cdot 3\cdot 5\cdot 7\cdot 11$ & $1440$ & $474$ & $-16$\\
\hline
$17$  & $4620=2^2\cdot 3\cdot 5\cdot 7\cdot 11$ & $1440$ & $497$ & $-17$\\
\hline
$18$  & $4620=2^2\cdot 3\cdot 5\cdot 7\cdot 11$ & $1440$ & $475$ & $-18$\\
\hline
$19$  & $4620=2^2\cdot 3\cdot 5\cdot 7\cdot 11$ & $1440$ & $558$ & $+19$\\
\hline
\end{tabular}
\end{center}
For $m=20,\ldots,41$ it turns out that $n=9240$.\\

\vfil\eject
\centerline{{\bf Table 4:} {\tt ($\Psi^*_n$) Minimal $n$ and $k$ with $|c^*_n(k)|=m$}}
\begin{center}
\begin{tabular}{|c|l|c|c|c|c|}
\hline
$m$    & $n$ & {\rm deg}$(\Psi_{n}^*)$ & $k$  & $c^*_{n}(k)$\\
\hline
$2$    & $120=2^3\cdot3\cdot 5$ & $64$ & $8$ & $-2$\\
\hline
$3$ & $420=2^2\cdot 3\cdot 5\cdot 7$ & $276$ & $12$ & $-3$\\
\hline
$4$ & $1008=2^4\cdot3^2\cdot 7$ & $288$ & $48$ & $-4$\\
\hline
$5$  & $1820=2^2\cdot 5\cdot 7\cdot 13$ & $956$ & $475$ & $+5$\\
\hline
$6$  & $3080=2^3\cdot 5\cdot 7\cdot 11$ & $1400$ & $66$ & $+6$\\
\hline
$7$  & $3080=2^3\cdot 5\cdot 7\cdot 11$ & $1400$ & $103$ & $+7$\\
\hline
$8$  & $3080=2^3\cdot 5\cdot 7\cdot 11$ & $1400$ & $114$ & $-8$\\
\hline
$9$  & $3080=2^3\cdot 5\cdot 7\cdot 11$ & $1400$ & $111$ & $-9$\\
\hline
$10$  & $3080=2^3\cdot 5\cdot 7\cdot 11$ & $1400$ & $112$ & $-10$\\
\hline
$11$  & $3080=2^3\cdot 5\cdot 7\cdot 11$ & $1400$ & $121$ & $+11$\\
\hline
$12$  & $3080=2^3\cdot 5\cdot 7\cdot 11$ & $1400$ & $122$ & $+12$\\
\hline
$13$  & $3080=2^3\cdot 5\cdot 7\cdot 11$ & $1400$ & $177$ & $+13$\\
\hline
$14$  & $9240=2^3\cdot 3\cdot5\cdot 7\cdot 11$ & $5880$& $261$ & $-14$\\
\hline
$15$  & $8580=2^2\cdot 3\cdot 5\cdot 11\cdot 13$ & $5700$ & $705$ & $-15$\\
\hline
$16$  & $9240=2^3\cdot 3\cdot5\cdot 7\cdot 11$ & $5880$ & $253$ & $-16$\\
\hline
$17$  & $9240=2^3\cdot 3\cdot5\cdot 7\cdot 11$ & $5880$ & $325$ & $+17$\\
\hline
$18$  & $9240=2^3\cdot 3\cdot5\cdot 7\cdot 11$ & $5880$ & $341$ & $+18$\\
\hline
$19$  & $9240=2^3\cdot 3\cdot5\cdot 7\cdot 11$ & $5880$ & $450$ & $+19$\\
\hline
\end{tabular}
\end{center}
For m=$16,\ldots,21$ it turns out that $n=9240$.\\

The tables suggest that the (unitary) cyclotomic polynomials are generically
of the same flatness as their inverses. However, generically an (inverse) cyclotomic
polynomial seems to be rather flatter than its unitary equivalent.

\subsection{Numerical material related to Theorem \ref{Gennady}}
\label{numsub}
If $n$ has three or less block divisors that are prime powers, then 
${\cal C}(\Phi^*_{n})$ consists of consecutive integers:
we have, e.g.\,, 
${\cal C}(\Phi^*_{8\cdot 11\cdot 13}) = \{-4, 3\}$ and
${\cal C}(\Phi^*_{27\cdot 29\cdot 31}) = \{-8,\ldots,8\}.$
If $n$ has four or more block divisors that are prime powers, this is not always true: we have, e.g., 
${\cal C}(\Phi^*_{2^4\cdot 3^2\cdot 5^2\cdot 7}) = \{-49,\ldots,44\}\backslash \{-48,-47,-45,-43,40,42,43\}.$

In practice both the upper and lower bound in \eqref{nietgelijk} are
often assumed, here we give just two examples.
\begin{itemize}
\item Let $p=2^2$, $q=5$, $s=3$, $r=23$. Then $r\equiv s\,({\rm mod~}pq)$ and 
$r>\max(p,q)>s=3$. We have $H(\Phi_{pqs}^*)=H(\Phi_{pqr}^*)=2.$
\item  Let $p=3^2$, $q=7$, $s=5$, $r=131$. Then 
$r\equiv s\,({\rm mod~}pq)$ and $r>\max(p,q)>s>3$. We have 
$H(\Phi_{pqr}^*) =H(\Phi_{pqs}^*)+1=3.$
\end{itemize}
We do not know of any simple criteria that can be used to determine which 
of the two bounds must hold.\\

\noindent {\tt Acknowledgment}. This paper is
partly based on the outcome of a student project 
by Greyson Jones, Philip Kester and Brenden White 
that they carried out under the direction of 
Lilit Martirosyan at the University of North
Carolina in Wilmington. The authors thank Gennady 
Bachman, Alexandru Ciolan and Andr\'es Herrera-Poyatos for helpful
feedback on earlier verions.

\vskip4mm
\medskip\noindent Greyson Jones, Philip Isaac Kester, 
Brenden Blake White\par\noindent
{\footnotesize e-mails: {\tt rgj5866@uncw.edu, pk7312@uncw.edu, bbw7810@uncw.edu}}
\vskip3mm

\medskip\noindent Lilit Martirosyan \par\noindent
{\footnotesize University of North Carolina, Wilmington\\
Department of Mathematics and Statistics\\ 
601 South College Road\\ Wilmington
NC 28403-5970, USA.\hfil\break
 e-mail: {\tt martirosyanl@uncw.edu}}
\vskip3mm

\noindent
Pieter Moree \\
{\footnotesize Max-Planck-Institut f\"ur Mathematik \\
Vivatsgasse 7, 53111 Bonn, Germany  \\
E-mail: {\tt moree@mpim-bonn.mpg.de}}
\vskip3mm

\noindent
L\'aszl\'o T\'oth  \\
{\footnotesize Department of Mathematics \\
University of P\'ecs \\
Ifj\'us\'ag \'utja 6, 7624 P\'ecs, Hungary \\
E-mail: {\tt ltoth@gamma.ttk.pte.hu}}
\vskip3mm

\noindent 
Bin Zhang \\
{\footnotesize School of Mathematical Sciences\\ 
Qufu Normal University\\ 
Qufu 273165, P. R. China\\
E-mail: {\tt zhangbin100902025@163.com}}

\end{document}